\newtheorem{theorem}{Theorem}
\newtheorem{lemma}[theorem]{Lemma}
\newtheorem{prop}[theorem]{Proposition}
\newtheorem{corollary}[theorem]{Corollary}
\newtheorem{defn}[theorem]{Definition}
\theoremstyle{definition}
\newtheorem{remark}[theorem]{Remark}
\def\R{\mathbb{R}}
\def\NN{\mathcal N}
\def\N{\mathbb{N}}
\title{Persistent Homology of Filtered Covers}
\author{\large Maia Fraser\thanks{Department of Computer Science, University of Chicago 
{\tt maia@cs.uchicago.edu}. 
AMS subject classification: 55U10, 62-07, 68R99. 
Keywords: persistent homology, nerve  lemma, topological data analysis.} }
\date{February 27, 2012}
\begin{document}

\maketitle

\begin{abstract}
We prove an extension to the simplicial
Nerve Lemma which establishes isomorphism of persistent homology groups,
in the case where the covering spaces are filtered. While persistent homology is
now widely used in topological data analysis, the usual Nerve Lemma does not
provide isomorphism of persistent homology groups.
Our argument involves some homological algebra: the key point being
that although the maps produced in the standard proof of the Nerve
Lemma do not commute as maps of chain complexes, the maps they induce
on homology do. 
\end{abstract}

Persistent homology has become a central tool in topological data analysis (TDA).
The purpose of the present paper is to update the Nerve Lemma accordingly\footnote{This result was announced in the tech report \cite{fraser}, which also surveys relevant recent work in TDA;  however, the sketch of proof suggested there did not correctly anticipate the issue discussed in Remark~\ref{rem:noCommute}.}.
Specifically we prove an extension to the
(finite) simplicial version of the Nerve Lemma, which is sufficient for the usual TDA applications.
Our proof is self-contained and elementary. After the writing of this paper, it was brought to our attention that
Chazal and Oudot \cite{chazalOudot} have proved an analogous result 
in the topological category, however their proof is more involved and relies on earlier work. 
In this light we feel there is merit in publishing our own argument for two reasons: 1. it provides
a proof in the simplicial category of this basic result (updating the semi-classical Nerve Lemma) 
and 2.  its simplicity allows one to 
see explicitly why the relevant chain maps must 
commute on homology level (see Remark~\ref{rem:noCommute} for a discussion of this issue). 

Our main result is the following.
\begin{prop}\label{prop:main}
Let $\Delta$ be a simplicial complex and 
$\{\Delta^\ell \}_{\ell \in I}$ a filtration of $\Delta$, as a topological space, such that
each $\Delta^\ell$ is also a simplicial complex. 
For each $\ell \in I$, suppose $(\Delta^{\ell}_i)_{i \in I}$ is a locally finite family of subcomplexes of  $\Delta^\ell$.
such that $\Delta^\ell = \bigcup\limits_{i \in I} \Delta^\ell_i$ and every nonempty finite intersection
$\Delta^\ell_{i_1} \cap \cdots, \cap \Delta^\ell_{i_t}$ is contractible. 
Suppose now for $\ell,  \ell+p \in I, p>0$ we have
\begin{equation} \label{eq:goodFiltration}
\Delta^\ell_i \text{ is a subcomplex of } \Delta^{\ell+p}_i.
\end{equation}
Then for each $k \in \N$,
$$H_k^p(\Delta^\ell) \cong H_k^p(\NN(\Delta^\ell_i)),$$
where $\NN(\Delta^\ell_i)$ is the nerve of the collection $(\Delta^{\ell}_i)_{i \in I}$. In other words
the $p$-persistent homology groups coincide at level $\ell$ 
in the filtrations $\Delta^\ell \subset \Delta^{\ell+p}$ and $\NN(\Delta^\ell_i) \subset \NN(\Delta^{\ell+p}_i) $.
\end{prop}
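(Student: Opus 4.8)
The plan is to deduce Proposition~\ref{prop:main} from the single assertion that the Nerve Lemma isomorphism can be chosen \emph{naturally} with respect to the filtration. Recall that $H_k^p(\Delta^\ell)$ is by definition the image of the map $H_k(\Delta^\ell)\to H_k(\Delta^{\ell+p})$ induced by the inclusion $\Delta^\ell\hookrightarrow\Delta^{\ell+p}$, and likewise $H_k^p(\NN(\Delta^\ell_i))$ is the image of $H_k(\NN(\Delta^\ell_i))\to H_k(\NN(\Delta^{\ell+p}_i))$. Hence it suffices to produce, for each $\ell\in I$, an isomorphism $\Phi^\ell\colon H_k(\Delta^\ell)\xrightarrow{\;\sim\;}H_k(\NN(\Delta^\ell_i))$ fitting into a square with the two inclusion-induced horizontal maps that commutes \emph{on homology}. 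A commuting square carries the image of the top map isomorphically onto the image of the bottom map, which is exactly the identification of $p$-persistent groups asserted.

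To build $\Phi^\ell$ I would run the standard, semi-classical proof of the Nerve Lemma through the Mayer--Vietoris double complex of the cover. For each $\ell$ set
$$D^\ell_{p,q}=\bigoplus_{i_0<\cdots<i_p} C_q\bigl(\Delta^\ell_{i_0}\cap\cdots\cap\Delta^\ell_{i_p}\bigr),$$
the sum running over tuples with nonempty intersection, with vertical differential the simplicial boundary and horizontal differential the alternating sum of the inclusion-induced maps. Two facts drive everything. First, the columns are acyclic away from degree $0$ because every nonempty intersection is contractible, so the degree-$0$ column homology is the nerve chain complex; this gives a chain-level quasi-isomorphism $\beta^\ell\colon \mathrm{Tot}(D^\ell)\to C_*(\NN(\Delta^\ell_i))$. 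Second, the rows are exact: fixing a simplex $\tau$ of $\Delta^\ell$, the summands containing $\tau$ form the augmented chain complex of the simplex on the (finite, by local finiteness) vertex set $\{\,i:\tau\in\Delta^\ell_i\,\}$, which is contractible. Row exactness makes the augmentation $a^\ell\colon C_*(\Delta^\ell)\to\mathrm{Tot}(D^\ell)$ into the $p=0$ column a quasi-isomorphism. I then set $\Phi^\ell=\beta^\ell_*\circ (a^\ell_*)^{-1}$, which reproves the Nerve Lemma at each level.

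The step I expect to be the main obstacle is the naturality of $\Phi^\ell$, and it is here that the phenomenon of Remark~\ref{rem:noCommute} must be confronted. By hypothesis \eqref{eq:goodFiltration} the inclusions $\Delta^\ell_i\subseteq\Delta^{\ell+p}_i$ induce inclusions of all intersections, hence a map of double complexes $\mathrm{Tot}(D^\ell)\to\mathrm{Tot}(D^{\ell+p})$, and both $a^\ell$ and $\beta^\ell$ are manifestly compatible with it \emph{at the chain level}. The trap, which the announced sketch fell into, is to try to realize $\Phi^\ell$ directly as a chain map $C_*(\Delta^\ell)\to C_*(\NN(\Delta^\ell_i))$ and to check its commutation with the filtration on chains: the naive composite $\beta^\ell\circ a^\ell$ is essentially trivial, and the actual isomorphism only emerges after inverting $a^\ell$, which at the chain level requires choosing a contracting homotopy in each contractible intersection. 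The contractions chosen at levels $\ell$ and $\ell+p$ are unrelated, so the inclusion does not intertwine the resulting zig-zags, and the tempting chain-level comparison maps genuinely fail to commute with the filtration.

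The resolution I would carry out is to keep the comparison on the homology level, where no choices intervene: since $a^\ell_*$ and $\beta^\ell_*$ are induced by chain maps that commute with the map of double complexes, the induced isomorphisms $a^\ell_*$ and $\beta^\ell_*$ are natural in $\ell$, and therefore so is $\Phi^\ell=\beta^\ell_*\circ (a^\ell_*)^{-1}$. Concretely, for a cycle $z$ representing a class of $H_k(\Delta^\ell)$ one threads $a^\ell(z)$ diagonally down to a bottom-row cycle of $\mathrm{Tot}(D^{\ell+p})$; the difference between first including $z$ and first descending is, in each intersection, the boundary of an explicit chain supplied by the contractibility of that intersection, and these assemble across the double complex into a chain homotopy. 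Thus the square commutes on homology even though it does not on chains, the images correspond, and the $p$-persistent groups coincide, proving Proposition~\ref{prop:main}. I anticipate the only remaining technical labor to be the bookkeeping in these two quasi-isomorphism lemmas (row exactness and column acyclicity), which are elementary once the two contractibility hypotheses are in hand.
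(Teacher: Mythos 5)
Your proposal is correct, but it takes a genuinely different route from the paper's. The paper follows Bjoerner's order-complex proof of the Nerve Lemma: it works with the explicit simplicial maps $\Theta^\ell\colon \Delta P(\Delta^\ell)\to\Delta P(\NN(\Delta^\ell_i))$, observes (Remark~\ref{rem:noCommute}) that the resulting square of chain maps genuinely fails to commute, and repairs this with an explicit simplicial mapping cylinder: for a cycle $\mu$ the chain $\varphi\,\mathrm{Cyl}(\mu,\mu')$ lies in $C_{k+1}(\Delta P(\NN(\Delta^{\ell+p}_i)))$ and has boundary $\Theta^\ell_\ast(\mu)-\Theta^{\ell+p}_\ast(\mu)$ (Lemmas~\ref{lem:cyl} and~\ref{lem:tech}), so the square commutes on homology. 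You instead run the Mayer--Vietoris double complex and compare $C_*(\Delta^\ell)$ with $C_*(\NN(\Delta^\ell_i))$ through the zig-zag of augmentation quasi-isomorphisms out of $\mathrm{Tot}(D^\ell)$; since both augmentations are honest chain maps commuting with the map of double complexes induced by \eqref{eq:goodFiltration}, naturality of $\Phi^\ell$ is automatic and the non-commutation phenomenon never arises---the only inversion happens on homology, where no contracting homotopies are chosen. One slip to fix: there is no natural chain map $a^\ell\colon C_*(\Delta^\ell)\to\mathrm{Tot}(D^\ell)$ (defining one would require choosing, for each simplex $\tau$, an index $i$ with $\tau\in\Delta^\ell_i$, exactly the kind of unnatural choice you are trying to avoid); the natural augmentation runs the other way, $a^\ell\colon\mathrm{Tot}(D^\ell)\to C_*(\Delta^\ell)$, summing the inclusions from the zeroth column, and row exactness makes it a quasi-isomorphism, after which $\Phi^\ell=\beta^\ell_*\circ(a^\ell_*)^{-1}\colon H_k(\Delta^\ell)\to H_k(\NN(\Delta^\ell_i))$ is exactly as you wrote it. With that correction, the diagonal chase with chosen contractions in your last paragraph is not needed for naturality at all; those choices enter only in proving the two quasi-isomorphism lemmas (via the first-quadrant filtration or spectral-sequence argument, where local finiteness guarantees each simplex lies in only finitely many $\Delta^\ell_i$). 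As for what each approach buys: the paper's argument is elementary and fully explicit---it exhibits the specific boundary witnessing homology-level commutativity, which is its stated pedagogical point---while yours dispatches the delicate naturality issue by construction, is closer in spirit to the functorial/topological result of Chazal and Oudot, and generalizes more readily (e.g.\ to maps between different filtered covers), at the cost of importing double-complex machinery.
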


We define persistent homology below. For more detail the reader is referred to
Weinberger's short expository article \cite{weinberger}, or Zomorodian's thesis \cite{zomor}.  

\paragraph{\sc Application to topological data analysis}
Much recent TDA work \cite{vinAlgTop, Edel02, EdelSurvey, noisyChazal, fundamChazal, 
glisseChazal, Edel07, deSilvaGunnar, oudotWitness, zomor, polyRings} uses $\alpha$-complexes 
to recover topological invariants of a submanifold
$\mathcal M \subset \R^N$ from a point cloud $Z$ associated to $\mathcal M$  while \cite{NSW1, NSW2} 
(in the case of smooth $\mathcal M$) uses \v{C}ech complexes and addresses more general sampling.  

Given a (finite) point cloud $Z \subset \R^d$, denote $K(Z, \alpha)$ the $\alpha$-complex, first defined by 
Edelsbrunner in 1995, \cite{EdelAlpha}; denote by \v{C}ech$(Z, \alpha)$ the \v{C}ech complex. In each case, 
vertices are points of $Z$. The {\bf \v{C}ech complex} is defined by:

$$\sigma = [z_0z_1\ldots z_p] \text{ is a $p$-simplex iff }\bigcap\limits_{j = 0}^p B(z_j, \alpha) \neq \emptyset$$. 

\noindent
The {\bf $\alpha$-complex $A(Z, \alpha)$} is defined\footnote{Warning: usually the notation $A(Z, \alpha)$ is used to 
denote the analogous complex obtained using balls of radius $\alpha/2$ intersected with Voronoi cells.\label{foot:halfalpha}} by:

$$\sigma = [z_0z_1\ldots z_p]\text{ is a $p$-simplex iff }
\bigcap\limits_{j = 0}^p [ B(z_j, \alpha) \cap V_j ] \neq \emptyset,$$
where $V_j = \{x \in \R^N: \forall z \in Z,\ d(x, z_j) \leq d(x, z) \}$ is the Voronoi cell of $z_j \in Z$. 

Let $U$ be the union of balls of radius $\alpha$ around points of $Z$. Both of the collections of  sets --
$\{B(z_j, \alpha): z_j \in Z \}$ and $\{ B(z_j, \alpha) \cap V_j : z_j \in Z \}$ -- are finite covers of $U$ and the 
complexes just defined are their {\bf nerves} (see Hatcher \cite{hatcher} or Bjoerner \cite{bjoerner}). 
Moreover the sets in these covers are convex. The Nerve Lemma \cite{bjoerner} therefore implies the 
nerves are homotopy equivalent to $U$, and hence to each other. 

\bigskip
 Proposition~\ref{prop:main}, as an extension of the simplicial Nerve Lemma, shows that the persistent homology 
groups coincide. Indeed for 
\v{C}ech complexes we may triangulate the collection of larger 
balls\footnote{The radius $\alpha$ plays the role of $\ell$ in the Proposition.} so that the smaller balls and intersections are 
subcomplexes, and similarly for $\alpha$-complexes; their union in either case is $U$.

\paragraph{\sc Persistent Homology}
Using the standard notation $C_k(X), Z_k(X), B_k(X)$ for $k$-chains, $k$-cycles and 
$k$-boundaries respectively of a simplicial complex $X$, we recall:

\begin{defn}[Persistent Homology]
Given integers $p, k >0$, and a filtered topological space $X = \bigcup\limits_{\ell=0}^\infty X^\ell$ with
$\ell < \ell' \Rightarrow X^\ell \subset X^{\ell'}$. The $p$-persistent $k$-th homology of $X^\ell$ is
the image of $H_k(X^\ell)$  in $H_k(X^{\ell+p})$ induced by inclusion. Equivalently, it may be defined as
\begin{align*}
H_k^p(X^\ell) := \frac {Z_k(X^\ell)} {B_k(X^{\ell+p}) \cap Z_k(X^\ell)}.
\end{align*}
\end{defn}

\paragraph{\sc Details of the Simplicial Nerve Lemma: Posets and Order Complexes}
This section is a summary of the relevant exposition in Bjoerner \cite{bjoerner}.
We use {\it poset} as a shorthand for {\it partially ordered set}.

The {\bf face poset} $P(\Delta)$ of a simplicial complex $\Delta$ is the set of faces (simplices) 
of $\Delta$ ordered by inclusion.
The {\bf order complex} $\Delta(P)$ of a poset $P$ with partial order $\leq$ is the simplicial complex with 
vertex set $P$ such that  $[x_0 \ldots x_n]$ a $k$-simplex if and only if $x_0 < \ldots < x_k$.
Given a simplicial complex $\Delta$, the simplicial complex $\Delta(P(\Delta))$ is called the {\bf barycentric subdivision}
of $\Delta$; it is homeomorphic to $\Delta$ (using geometric realizations). For readability we write $\Delta P(\Delta)$ .

From now on, we {\bf assume the hypotheses of Proposition~\ref{prop:main}.}
The simplicial version of the Nerve Lemma is proved in \cite{bjoerner} by showing that
a certain continuous map 
$$\Theta^\ell: \Delta P(\Delta^\ell) \to \Delta P(\NN(\Delta^\ell_i))$$ 
is a homotopy equivalence, so $\Delta^\ell$ and $\NN(\Delta^\ell_i)$ are homotopy equivalent. In particular,
$\Theta^\ell$ induces an isomorphism between homology groups.
The map $\Theta^\ell$ is defined starting with the poset map $f^\ell: P(\Delta^\ell)) \to P(\NN(\Delta^\ell_i))$ given by
$$\pi \mapsto \{i \in I \colon \pi \in \Delta_i \}.$$
This is an order-reversing poset map and so induces a simplicial map, 
$$\Theta^\ell \colon \Delta P(\Delta^\ell) \to \Delta P(\NN(\Delta^\ell_i)),$$ 
whose effect on vertices is given by $f^\ell$.
In fact $\Theta^\ell$ can also be defined in this way on all of $\Delta P(\Delta^{\ell+p})$, and we will assume this.

\begin{remark} \label{rem:nested}
We remark that $\Delta P(\Delta^\ell)$ is a subcomplex of $\Delta P(\Delta^{\ell+p})$ because 
$\Delta^\ell$ is a subcomplex of $\Delta^{\ell+p}$ (hence any face of 
$\Delta^\ell$ is a face of $\Delta^{\ell+p}$ and moreover nested faces $x_0 < \ldots < x_k$ of $\Delta^\ell$
are nested faces of $\Delta^{\ell+p}$).
Also, $\Delta P (\NN(\Delta_i^\ell))$ is a subcomplex of $\Delta P(\NN(\Delta_i^{\ell+p}))$ by the 
same reasoning, since $\NN(\Delta_i^\ell)$ is a subcomplex of $\NN(\Delta_i^{\ell+p})$ by 
Equation \eqref{eq:goodFiltration}.
Indeed, by \eqref{eq:goodFiltration},
a nonempty intersection $\Delta^\ell_{i_1} \cap \ldots \cap \Delta^\ell_{i_k}$ implies a nonempty
intersection $\Delta^{\ell+p}_{i_1} \cap \ldots \cap \Delta^{\ell+p}_{i_k}$.
We will not write these subcomplex inclusions explicitly; as commented earlier, we assume $\Theta^\ell$ is defined 
on all of $\Delta P(\Delta^{\ell+p})$.
\end{remark}

\begin{remark} \label{rem:noCommute}
Given $\sigma \in \Delta P(\Delta^\ell)$, it is {\it not true} in general that 
$\Theta^{\ell+p} (\sigma) = \Theta^{\ell} (\sigma).$ 
In other words the following diagram does not commute:
\begin{diagram}
\Delta P(\Delta^{\ell+p}) & \rTo^{\Theta^{\ell+p}} & \Delta P(\NN(\Delta_i^{\ell+p})) \\
\uInto^{\subseteq} && \uInto^{\subseteq} \\
\Delta P(\Delta^\ell) &  \rTo^{\Theta^{\ell}} & \Delta P(\NN(\Delta_i^{\ell})).
\end{diagram}
Indeed, this may be seen already at vertex level: the poset map $f^\ell$ takes
a simplex $\pi$ of $\Delta^\ell$ to the set of all indices $i$ such that $\pi$ is a subsimplex of $\Delta^\ell_i$,
while $f^{\ell+p}$ takes $\pi$ to the set of all $i$ such that $\pi$ is a subsimplex of $\Delta^{\ell+p}_i$.
The second set contains the first by Equation \eqref{eq:goodFiltration}, but may be strictly larger.
In that case $\Theta^\ell$ and $\Theta^{\ell+p}$ map
the vertex $\pi$ of $\Delta P(\Delta^\ell) \subset \Delta P(\Delta^{\ell+p})$ 
to distinct vertices of $\Delta P (\NN(\Delta^{\ell+p}_i))$.
\end{remark}

We will, however, show that the induced chain maps $\Theta^\ell_\ast$ and $\Theta^{\ell+p}_\ast$
differ on $k$-cycles of $\Delta P(\Delta^{\ell+p})$
by boundaries of $\Delta P (\NN(\Delta^{\ell+p}_i))$. In other words, the homology-level diagram induced by the above diagram {\it does} commute, giving
an isomorphism of  the respective $p$-persistent homology groups, 
$H_k^p(\Delta P(\Delta^{\ell})) \cong H_k^p(\Delta P (\NN(\Delta^{\ell}_i)))$, and 
therefore $H_k^p(\Delta^{\ell}) \cong H_k^p(\NN(\Delta^{\ell}_i)).$

\paragraph{\sc Technical Lemma}

Given two $k$-simplices $\sigma$ and $\tau$ with a fixed ordering of the vertices of each,
we define a preferred simplicial decomposition of the mapping cylinder of
the simplicial map that sends one simplex to the other preserving vertex order.
Each of the original simplices belongs to this abstract simplicial complex.

\begin{remark} This is a simpler version of the 
usual simplicial mapping cylinder, as we do not take a barycentric subdivision 
of one of the simplices. 
\end{remark}

\noindent
We write $[v_0v_1 \ldots v_k]^o$ to denote the $k$-simplex $[v_0v_1 \ldots v_k]$
with this explicit vertex ordering and refer to it as an {\bf ordered simplex}.

\begin{defn}[Simplicial Mapping Cylinder]
Given two ordered $k$-simplices $\sigma = [v_0 v_1 \ldots v_k]^o$ and $\tau = [w_0 w_1 \ldots w_k]^o$, 
define
\begin{align*}
\text{\rm Cyl}(\sigma, \tau) &:= \sum\limits_{t=0}^k (-1)^{t+1} [v_0 \ldots v_t w_t \ldots w_k],
\end{align*} a formal linear combination of abstract $(k+1)$-simplices on the vertex set 
$\{v_0, \ldots, v_k\} \sqcup \{w_0, \ldots, w_k \}$.
Let $\mu_1, \mu_2$ be $k$-chains of a simplicial complex $X$ with vertex set $V$. If we have
$$ \mu_1 = \sum\limits_{i=0}^m a_i \sigma_i  \text{ and } \mu_2 = \sum\limits_{i=0}^m a_i \tau_i$$
then we say $\mu_1$ and $\mu_2$ are \emph{compatible}
and define (for a fixed ordering of the vertices of each $\sigma_i$ and $\tau_i$)
$$\text{\rm Cyl}(\sum\limits_{i=0}^m a_i \sigma_i, \sum\limits_{i=0}^m a_i \tau_i) := 
\sum\limits_{i=0}^m a_i \, \text{\rm Cyl}(\sigma_i, \tau_i)$$
as a formal linear combination of abstract $(k+1)$-simplices on the vertex set $V \sqcup V$.
\end{defn}

In fact, $\text{Cyl}(\sigma, \tau)$  in the definition, provides a simplicial decomposition of the topological 
mapping cylinder of the map given by $v_i \mapsto w_i$. We will only need the following
(which we prove in the Appendix):
\begin{lemma}\label{lem:cyl}
Given two compatible $k$-chains $\mu_1$ and $\mu_2$,
$$\partial \, \text{\rm Cyl}(\mu_1, \mu_2) = \mu_1 - \mu_2 - \text{\rm Cyl} 
(\partial \mu_1, \partial \mu_2).$$
\end{lemma}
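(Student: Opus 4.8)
The plan is to reduce to the case of single ordered simplices and then carry out a direct, if careful, boundary computation. First I would observe that both sides of the asserted identity are linear in the pair $(\mu_1,\mu_2)$: the left side by the definition of $\text{Cyl}$ on compatible chains together with linearity of $\partial$, and the right side because $\partial$ is linear and, crucially, because $\partial$ preserves compatibility. Indeed, if $\mu_1 = \sum_i a_i\sigma_i$ and $\mu_2 = \sum_i a_i\tau_i$, then $\partial\mu_1 = \sum_i\sum_j a_i(-1)^j\sigma_i^{(j)}$ and $\partial\mu_2 = \sum_i\sum_j a_i(-1)^j\tau_i^{(j)}$, where $\sigma_i^{(j)},\tau_i^{(j)}$ denote the $j$-th faces with their inherited vertex orders; these share the coefficients $a_i(-1)^j$, so $\partial\mu_1,\partial\mu_2$ are again compatible and $\text{Cyl}(\partial\mu_1,\partial\mu_2)$ is defined. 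Thus it suffices to verify
$$\partial\,\text{Cyl}(\sigma,\tau) = \sigma - \tau - \text{Cyl}(\partial\sigma,\partial\tau)$$
for a single compatible pair $\sigma=[v_0\ldots v_k]^o$, $\tau=[w_0\ldots w_k]^o$.

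Second, I would expand the left side. Writing $S_t := [v_0\ldots v_t w_t\ldots w_k]$, so that $\text{Cyl}(\sigma,\tau)=\sum_{t=0}^k(-1)^{t+1}S_t$, I would compute $\partial\,\text{Cyl}(\sigma,\tau)=\sum_{t=0}^k(-1)^{t+1}\partial S_t$ and sort the resulting faces into two families according to which vertex of $S_t$ is deleted. The \emph{diagonal} faces, obtained by deleting one of the two middle vertices $v_t$ or $w_t$, all have the form $D_t := [v_0\ldots v_t w_{t+1}\ldots w_k]$, with the conventions $D_k=\sigma$ and $D_{-1}=\tau$. A short sign check shows that deleting $w_t$ from $S_t$ contributes $+D_t$ while deleting $v_t$ contributes $-D_{t-1}$, so the diagonal family telescopes: $\sum_{t=0}^k(D_t-D_{t-1})=D_k-D_{-1}=\sigma-\tau$.

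Third, I would treat the \emph{off-diagonal} faces, obtained by deleting $v_j$ with $j<t$ or $w_j$ with $j>t$, and show they assemble precisely into $-\text{Cyl}(\partial\sigma,\partial\tau)$. Expanding $-\text{Cyl}(\partial\sigma,\partial\tau)=-\sum_j(-1)^j\,\text{Cyl}(\sigma^{(j)},\tau^{(j)})$ and relabeling the vertices of each face in their induced order (writing $u_s$ for the $v$-side and $u'_s$ for the $w$-side), each summand $[u_0\ldots u_s\,u'_s\ldots u'_{k-1}]$ of $\text{Cyl}(\sigma^{(j)},\tau^{(j)})$ translates back into a face of some $S_t$: the range $s<j$ reproduces the terms obtained by deleting $w_j$ (with $t=s$), and the range $s\ge j$ reproduces the terms obtained by deleting $v_j$ (with $t=s+1$). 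Matching the accumulated signs across these two ranges is exactly the content of the identity.

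The main obstacle is the sign bookkeeping in this last step. While the diagonal telescoping is immediate, the off-diagonal matching requires tracking three independent sources of sign — the coefficient $(-1)^{t+1}$ from the definition of $\text{Cyl}$, the positional sign from the boundary of $S_t$, and the sign $(-1)^j$ from $\partial\sigma,\partial\tau$ — through the two relabeling ranges, and checking that the net sign of each off-diagonal face agrees with that of its counterpart in $-\text{Cyl}(\partial\sigma,\partial\tau)$. This is a finite but delicate case analysis; once the ranges $s<j$ and $s\ge j$ are verified, the identity follows.
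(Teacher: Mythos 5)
Your proposal is correct and follows essentially the same route as the paper's appendix proof: reduce to a single pair of ordered $k$-simplices by linearity (the paper states this reduction without your explicit check that $\partial$ preserves compatibility, which is a nice touch), then expand $\partial\,\text{Cyl}(\sigma,\tau)$ and match faces against $\text{Cyl}(\partial\sigma,\partial\tau)$ term by term. Your bookkeeping differs only cosmetically — you telescope the diagonal faces $D_t$ upfront to produce $\sigma-\tau$, whereas the paper keeps the middle diagonal terms inside the double sums and cancels them at the end of the $\text{Cyl}(\partial\sigma,\partial\tau)$ expansion — and your stated sign correspondences (deleting $w_j$ with $j>t$ matching the range $s<j$ via $t=s$, and deleting $v_j$ with $j<t$ matching $s\ge j$ via $t=s+1$) are exactly right.
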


Therefore,
\begin{corollary} \label{cor:cycles}
If $\mu_1$ and $\mu_2$ are compatible $k$-cycles,
$$\partial \, \text{\rm Cyl}(\mu_1, \mu_2) = \mu_1 - \mu_2.$$
\end{corollary}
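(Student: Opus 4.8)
The plan is to obtain this corollary as an immediate specialization of Lemma~\ref{lem:cyl}. Since $\mu_1$ and $\mu_2$ are cycles, we have $\partial \mu_1 = 0$ and $\partial \mu_2 = 0$, so the correction term $\text{\rm Cyl}(\partial \mu_1, \partial \mu_2)$ appearing in the lemma should vanish, leaving exactly $\partial \, \text{\rm Cyl}(\mu_1, \mu_2) = \mu_1 - \mu_2$. Thus essentially no new work is required: all of the combinatorial content lives in the lemma, and the corollary is just the observation that the boundary-correction term disappears precisely on cycles.

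The only point requiring a moment's care is to verify that $\text{\rm Cyl}(0,0)$ is genuinely the zero chain rather than an undefined expression. I would invoke the definition of $\text{\rm Cyl}$ on compatible chains directly: the zero chains $\partial\mu_1$ and $\partial\mu_2$ are (trivially) compatible, and writing them as $\sum_i a_i \sigma_i$ and $\sum_i a_i \tau_i$ with all coefficients $a_i = 0$, the defining formula $\text{\rm Cyl}(\sum_i a_i \sigma_i, \sum_i a_i \tau_i) = \sum_i a_i\, \text{\rm Cyl}(\sigma_i, \tau_i)$ collapses to $0$. Hence $\text{\rm Cyl}(\partial \mu_1, \partial \mu_2) = \text{\rm Cyl}(0,0) = 0$, and substituting this into the identity furnished by Lemma~\ref{lem:cyl} yields the claim.

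I do not expect any genuine obstacle at this stage; the statement is a one-line consequence once the lemma is in hand. Its role, however, is the crucial one for Proposition~\ref{prop:main}: it is exactly in the setting where $\text{\rm Cyl}$ is applied to the pair of compatible cycles $\Theta^\ell_\ast(\mu)$ and $\Theta^{\ell+p}_\ast(\mu)$ (for a $k$-cycle $\mu$ of $\Delta P(\Delta^{\ell+p})$) that the corollary exhibits their difference as a boundary, thereby forcing the homology-level diagram of Remark~\ref{rem:noCommute} to commute.
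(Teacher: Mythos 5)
Your proof is correct and matches the paper's (implicit) argument exactly: the corollary follows from Lemma~\ref{lem:cyl} because $\partial\mu_1 = \partial\mu_2 = 0$ kills the correction term, and your check that $\text{\rm Cyl}(0,0)=0$ under the defining linearity is a reasonable extra precaution. One tiny contextual slip: in the paper the cylinder is taken on $\mu$ and its primed copy $\mu'$ (with the chain map $\varphi$ applied afterwards), not directly on $\Theta^\ell_\ast(\mu)$ and $\Theta^{\ell+p}_\ast(\mu)$ --- but this does not affect the validity of your proof of the corollary itself.
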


The reason for defining $\text{Cyl}()$ in this manner is its well-behaved interaction
with $\Theta^\ell$ and $\Theta^{\ell+p}$ which we now describe.
Let $V$ be the vertex set of $\Delta P(\Delta^{\ell+p})$. Suppose we use apostrophes 
to indicate the elements of  $V \sqcup V$ which come from the second factor; so
$V \sqcup V = V \cup \{v' \colon v \in V \}$.  
Let $\mu \in Z_k(\Delta P(\Delta^{\ell+p}))$ be a $k$-cycle. Denote by $\mu'$ the corresponding
$k$-cycle using the vertices $v'$. These are compatible $k$-chains and so $\text{Cyl}(\mu, \mu')$ is well-defined
(for any fixed ordering of the vertices of simplices of $\mu$). 
It is a linear combination of abstract $(k+1)$-simplices on the vertex set $V \sqcup V$ and so we may apply to it 
the chain map $\varphi$ induced by $$v \mapsto f^\ell (v), \;  v' \mapsto f^{\ell+p} (v).$$ 
Both of these images are vertices of $\Delta P(\NN(\Delta^{\ell+p}_i))$. 
By Corollary~\ref{cor:cycles}, we have
\begin{align*}
\partial \varphi \, \text{\rm Cyl}(\mu, \mu')  &=  \varphi \partial  \, \text{\rm Cyl}(\mu, \mu')  \\
&= \varphi ( \mu - \mu' ) \\
&= \Theta^\ell_\ast (\mu) - \Theta^{\ell+p}_\ast (\mu),
\end{align*}
where $\Theta^\ell_\ast$ and $\Theta^{\ell+p}_\ast$ are the chain maps induced 
by $\Theta^\ell$ and $\Theta^{\ell+p}$ respectively. The latter, we assume, are both defined on all of 
$\Delta P(\Delta^{\ell+p})$, mapping into $\Delta P(\NN(\Delta^{\ell+p}_i))$ (see Remark~\ref{rem:nested}). 
Here $\varphi \, \text{\rm Cyl}(\mu, \mu')$ is a formal linear combination of {\it abstract} $(k+1)$-simplices on the 
vertex set of $\Delta P(\NN(\Delta^{\ell+p}_i))$; none of these $(k+1)$-simplices need a priori 
be actual simplices of $\Delta P(\NN(\Delta^{\ell+p}_i))$. 
The following techical lemma shows, however, that they are,
assuming a natural ordering of the vertices in each simplex of $\mu$.

\begin{lemma}\label{lem:tech}
Let $\sigma$ be a $k$-simplex of $\Delta P(\Delta^\ell))$, with the canonical vertex order 
inherited from the underlying poset. Then 
$\varphi \, \text{\rm Cyl}(\sigma, \sigma) \in C_{k+1}(\Delta P(\NN(\Delta^{\ell+p}_i)))$.
Hence, for any $\mu \in Z_k(\Delta P(\Delta^{\ell+p})),$
$\Theta^\ell_\ast (\mu) - \Theta^{\ell+p}_\ast (\mu) \in B_{k}( \Delta P(\NN(\Delta^{\ell+p}_i)))$.
\end{lemma}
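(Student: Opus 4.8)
The plan is to prove the first assertion one simplex at a time and then extend by linearity, since both $\varphi$ and $\text{Cyl}(\cdot,\cdot)$ are linear in their simplex arguments. So I would fix a $k$-simplex $\sigma = [\pi_0 \pi_1 \ldots \pi_k]$ of $\Delta P(\Delta^\ell)$ --- that is, a chain of faces of $\Delta^\ell$ --- and write $A_s := f^\ell(\pi_s)$ and $B_s := f^{\ell+p}(\pi_s)$, so that under $\varphi$ the $t$-th summand of $\text{Cyl}(\sigma,\sigma')$ becomes the abstract $(k+1)$-simplex $[A_0 \ldots A_t B_t \ldots B_k]$. The entire question is whether each such abstract simplex is genuinely a simplex of $\Delta P(\NN(\Delta^{\ell+p}_i))$, i.e.\ whether its vertices form a chain in the face poset of the nerve (ordered by inclusion of index sets). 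Note first that because each $\pi_s$ lies in $\Delta^\ell$, the index set $A_s$, and hence $B_s \supseteq A_s$, is nonempty, so every $A_s$ and $B_s$ is a bona fide vertex of $\Delta P(\NN(\Delta^{\ell+p}_i))$.

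The crux is to fix the \emph{right} vertex order on $\sigma$. I would order the vertices so that the images under $f^\ell$ are increasing; since $f^\ell$ is order-reversing this is simply the canonical poset order read off $\sigma$ (largest face first), and crucially the \emph{same} order makes the $f^{\ell+p}$-images increasing as well, because $f^{\ell+p}$ is order-reversing too. This yields three nesting facts: (a) $A_0 \subseteq A_1 \subseteq \cdots \subseteq A_k$; (b) $B_0 \subseteq B_1 \subseteq \cdots \subseteq B_k$; and (c) $A_s \subseteq B_s$ for each $s$, the last being exactly Equation~\eqref{eq:goodFiltration}. Combining these, the vertex set of the $t$-th summand satisfies
$$A_0 \subseteq \cdots \subseteq A_t \subseteq B_t \subseteq \cdots \subseteq B_k,$$
a single chain whose only nontrivial link, $A_t \subseteq B_t$, is supplied by the filtration hypothesis; every cross-comparison $A_i \subseteq B_j$ with $i \le t \le j$ then follows by transitivity through this junction. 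Hence each $(k+1)$-simplex occurring in $\varphi\,\text{Cyl}(\sigma,\sigma')$ is a genuine simplex of $\Delta P(\NN(\Delta^{\ell+p}_i))$ (or is degenerate, contributing $0$), so $\varphi\,\text{Cyl}(\sigma,\sigma') \in C_{k+1}(\Delta P(\NN(\Delta^{\ell+p}_i)))$.

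I expect the main obstacle to be precisely this chain check. With the naive increasing-face order both the $f^\ell$- and the $f^{\ell+p}$-images \emph{decrease}, and the resulting vertex set $\{A_0,\ldots,A_t,B_t,\ldots,B_k\}$ need not be totally ordered; indeed one can write down an explicit cover where $A_i$ and $B_j$ are incomparable. Recognizing that the order-reversal of $f$, together with the junction inequality \eqref{eq:goodFiltration}, is exactly what rescues the chain condition is the heart of the argument. For the concluding ``Hence'', I would extend linearly: for a cycle $\mu = \sum_j a_j \sigma_j$ supported on $\Delta P(\Delta^\ell)$ (the case used in the definition of persistent homology, where cycles live at level $\ell$) we obtain $\varphi\,\text{Cyl}(\mu,\mu') = \sum_j a_j\, \varphi\,\text{Cyl}(\sigma_j,\sigma_j') \in C_{k+1}(\Delta P(\NN(\Delta^{\ell+p}_i)))$, and then the identity $\partial\,\varphi\,\text{Cyl}(\mu,\mu') = \Theta^\ell_\ast(\mu) - \Theta^{\ell+p}_\ast(\mu)$ derived above (via Corollary~\ref{cor:cycles} and $\partial\varphi = \varphi\partial$) exhibits $\Theta^\ell_\ast(\mu) - \Theta^{\ell+p}_\ast(\mu)$ as the boundary of an honest $(k+1)$-chain, i.e.\ an element of $B_k(\Delta P(\NN(\Delta^{\ell+p}_i)))$, as required.
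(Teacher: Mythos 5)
Your proposal is correct and takes essentially the same route as the paper's own proof: you order the vertices of $\sigma$ so that both families of poset images increase (exploiting that $f^\ell$ and $f^{\ell+p}$ are order-reversing) and verify the chain condition $f^\ell(x_0) \le \cdots \le f^\ell(x_t) \le f^{\ell+p}(x_t) \le \cdots \le f^{\ell+p}(x_k)$ through the single junction inequality supplied by \eqref{eq:goodFiltration}, which is precisely the paper's argument, including the handling of degenerate simplices killed by $\varphi$. The only (harmless) divergence is that you state the concluding implication for cycles supported at level $\ell$ rather than for all of $Z_k(\Delta P(\Delta^{\ell+p}))$ as in the lemma's statement, but since the proof of Proposition~\ref{prop:main} only applies the lemma to cycles representing classes of $H_k(\Delta P(\Delta^\ell))$, this restriction costs nothing.
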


\begin{proof}
Note that if $x$ is a vertex of $\Delta P(\Delta^\ell)$ then $x$ is a simplex of $\Delta^\ell$ and by
Remark~\ref{rem:noCommute}, $f^\ell (x) \leq f^{\ell+p}(x)$. Suppose the $k$-simplex $\sigma$ 
of $\Delta P(\Delta^\ell))$ is defined
by nested simplices\footnote{The indexing is done this way to make order-reversed images via $f^\ell$ 
and $f^{\ell+p}$ easier to read.}  $x_k < \ldots < x_0$ of $\Delta^\ell$
and take any $t, \, 1 \leq t \leq k$. We have
$$f^\ell(x_0) \leq \ldots \leq f^\ell(x_t)  \leq f^{\ell+p}(x_t) \leq \ldots f^{\ell+p}(x_k).$$
Therefore, for all $t, \, 1 \leq t \leq k$,
$$\{ f^\ell(x_0), \ldots, f^\ell(x_t), f^{\ell+p}(x_t),\ldots,  f^{\ell+p}(x_k) \}$$ is a simplex of  
$\Delta P( \NN(\Delta^{\ell+p}_i))$ (possibly of dimension less than $k$). 
And so, in the sum for $\text{\rm Cyl}(\sigma, \sigma)$,
the abstract $k$-simplices $[x_0 \ldots x_t x'_t \ldots x'_k]$  which
are not killed off by the chain map $\varphi$ will be mapped to actual $k$-simplices 
$$[f^\ell(x_0) \ldots f^\ell(x_t) f^{\ell+p}(x_t) \ldots f^{\ell+p}(x_k)]$$ of $\Delta P( \NN(\Delta^{\ell+p}_i))$
(apostrophes denoting vertices in the second factor of $V\sqcup V$, as before).
The final statement of the Lemma follows immediately; it suffices to assume the above-mentioned canonical
vertex order in each simplex of $\mu$.
\end{proof}

\paragraph{\sc Proof of the Proposition}

\begin{proof}[Proof of Proposition~\ref{prop:main}.]

We now consider the homology level diagram induced by the diagram of Remark~\ref{rem:noCommute}.
By the proof of the Nerve Lemma, the chain maps
$$\Theta^\ell_\ast: C_k(\Delta P(\Delta^{\ell})) \to C_k( \Delta P(\NN(\Delta^{\ell}_i)))$$ 
and 
$$\Theta^{\ell+p}_\ast: C_k(\Delta P(\Delta^{\ell+p})) \to C_k( \Delta P(\NN(\Delta^{\ell+p}_i)))$$ 
descend to isomorphisms on homology (we retain the same names for the new maps). 
So we have,
\begin{diagram}
H_k(\Delta P(\Delta^{\ell+p})) & \rTo^{\Theta_\ast^{\ell+p}}_\cong & H_k(\Delta P(\NN(\Delta_i^{\ell+p}))) \\
\uTo && \uTo \\
H_k(\Delta P(\Delta^\ell)) &  \rTo^{\Theta_\ast^{\ell}}_\cong & \Delta H_k(P(\NN(\Delta_i^{\ell}))),
\end{diagram}
where the vertical maps are those induced by inclusion. By Lemma~\ref{lem:tech}, {\it this diagram commutes}.
Indeed, given a homology class $[\mu]$ in the bottom left corner, with $\mu$ a cycle representing it, the Lemma implies that $\Theta^\ell_\ast(\mu)$ and $\Theta^{\ell+p}_\ast(\mu)$ differ by a boundary in the top right corner.

Therefore, the image of $H_k(\Delta P(\Delta^\ell))$ in $H_k(\Delta P(\Delta^{\ell+p}))$ is isomorphic to
the image of $H_k(P(\NN(\Delta_i^{\ell})))$ in $H_k(\Delta P(\NN(\Delta_i^{\ell+p})))$; i.e.,
$$H_k^p(\Delta P(\Delta^{\ell})) \cong H_k^p( \Delta P(\NN(\Delta^{\ell}_i))).$$
\end{proof}

\paragraph{\sc Acknowledgements} The author wishes to thank Shmuel Weinberger and Vin de Silva for useful comments on a preliminary version of this paper.

\paragraph{\sc Appendix}
\begin{proof}[Proof of Lemma~\ref{lem:cyl}]
Recall that 
$$\partial \, [v_0 v_1 \ldots v_k] = \sum\limits_{j=0}^k (-1)^j [v_0 \ldots \hat v_j \ldots v_k].$$
We prove the Lemma for $k$-simplices; it follows for compatible $k$-chains.
\begin{align*}
& \hspace{-.5cm} \partial \, \text{\rm Cyl} ([v_0 \ldots v_k], [w_0 \ldots w_k]) \\
&= \partial \sum\limits_{t=0}^k (-1)^{t+1} [v_0 \ldots v_t w_t \ldots w_k] \\
&= -\partial [v_0w_0\ldots w_k] + (-1)^{k+1}\partial [v_0\ldots v_kw_k]
+ \sum\limits_{t=1}^{k-1} (-1)^{t+1} \partial [v_0 \ldots v_t w_t \ldots w_k]\\
&= -[w_0 \ldots w_k] + (-1)^{2(k+1)} [v_0 \ldots v_k] \\
&\hspace{1cm} + \sum\limits_{j=0}^k (-1)^j [v_0w_0\ldots \hat w_j \ldots w_k] 
+ (-1)^{k+1} \sum\limits_{j=0}^k (-1)^j[v_0\ldots \hat v_j \ldots v_kw_k] \\
&\hspace{1cm} + 
\sum_{t=1}^{k-1} (-1)^{t+1} \sum\limits_{j=0}^{t} (-1)^j [v_0 \ldots \hat v_j \ldots v_tw_t\ldots w_k] \\
&\hspace{1cm} + 
\sum_{t=1}^{k-1} (-1)^{t+1} \sum\limits_{j=t}^k (-1)^{j+1} [v_0 \ldots v_tw_t \ldots \hat w_j \ldots w_k] \\
&=  [v_0 \ldots v_k] - [w_0 \ldots w_k] - \text{\rm Cyl} (\partial [v_0 \ldots v_k], \partial [w_0 \ldots w_k]) 
\end{align*}

\noindent
because
\begin{align*}
& \text{\rm Cyl} (\partial [v_0 \ldots v_k], \partial [w_0 \ldots w_k]) \\
&= \sum\limits_{j=0}^k (-1)^j \; \text{\rm Cyl} ([v_0 \ldots \hat v_j \ldots v_k], [w_0\ldots \hat w_j \ldots w_k])  \\
&= \sum\limits_{j=0}^k (-1)^j \left \{ 
\sum\limits_{t=0}^{j-1} (-1)^{t+1} [v_0 \ldots v_t w_t \ldots \hat w_j \ldots w_k] + 
\sum\limits_{t=j+1}^k (-1)^{t} [v_0 \ldots \hat v_j \ldots v_t w_t \ldots w_k] \right \} \\
&= \sum\limits_{t=0}^{k-1} (-1)^j 
\sum\limits_{j=t+1}^{k} (-1)^{t+1} [v_0 \ldots v_t w_t \ldots \hat w_j \ldots w_k] \\
&\hspace{1cm} + 
\sum\limits_{t=1}^{k} (-1)^j 
\sum\limits_{j=0}^{t-1} (-1)^{t} [v_0 \ldots \hat v_j \ldots v_t w_t \ldots w_k] \\
\end{align*}
\noindent
\begin{align*}
&= - \sum\limits_{t=0}^{k-1} 
\sum\limits_{j=t+1}^{k} (-1)^{t+j} [v_0 \ldots v_t w_t \ldots \hat w_j \ldots w_k] \\
&\hspace{1cm} -
\sum\limits_{t=1}^{k} 
\sum\limits_{j=0}^{t-1} (-1)^{t+j+1}  [v_0 \ldots \hat v_j \ldots v_t w_t \ldots w_k] \\
&= -  \left\{ \sum\limits_{t=0}^{k-1} 
\sum\limits_{j=t}^{k} (-1)^{t+j} [v_0 \ldots v_t w_t \ldots \hat w_j \ldots w_k]  - 
\sum\limits_{t=0}^{k-1} (-1)^{2t}[v_0\ldots v_tw_{t+1}\ldots w_k] \right \}\\
&\hspace{1cm} - \left\{ \sum\limits_{t=1}^{k} 
\sum\limits_{j=0}^{t} (-1)^{t+j+1}  [v_0 \ldots \hat v_j \ldots v_t w_t \ldots w_k] -
\sum\limits_{t=1}^{k} (-1)^{2t+1}[v_0\ldots v_{t-1}w_{t}\ldots w_k] \right \} \\
&= -  \left\{ 
\sum\limits_{t=0}^{k-1} 
\sum\limits_{j=t}^{k} (-1)^{t+j} [v_0 \ldots v_t w_t \ldots \hat w_j \ldots w_k]  +
\sum\limits_{t=1}^{k} 
\sum\limits_{j=0}^{t} (-1)^{t+j+1}  [v_0 \ldots \hat v_j \ldots v_t w_t \ldots w_k]
\right \} \\
&= -  \left\{ 
\sum\limits_{t=1}^{k-1} 
\sum\limits_{j=t}^{k} (-1)^{t+j} [v_0 \ldots v_t w_t \ldots \hat w_j \ldots w_k]  +
\sum\limits_{t=1}^{k-1} 
\sum\limits_{j=0}^{t} (-1)^{t+j+1}  [v_0 \ldots \hat v_j \ldots v_t w_t \ldots w_k]
\right . \\
&\hspace{1cm} + \left .
\sum\limits_{j=0}^{k} (-1)^{j} [v_0 w_0 \ldots \hat w_j \ldots w_k]  +
+
\sum\limits_{j=0}^{k} (-1)^{k+j+1}  [v_0 \ldots \hat v_j \ldots v_k w_k]
\right \}.
\end{align*}

\end{proof}

\end{document}